      \newcommand {\al}   {\alpha}
              \newcommand {\ve}   {\varepsilon}
                \newcommand {\Om}  {\Omega}
      \newcommand {\pl}   {\partial}
      \newcommand {\RRR}  {{\mathbb R}}
              \newcommand {\BBB}  {{\cal B}}
     \newcommand {\beq}  {\begin{equation}}
      \newcommand {\eeq}  {\end{equation}}
    \newcommand{\figbasicconstruction}{1}
    \newcommand{\fthreepictures}{2}
    \newcommand{\figinvisible}{3}
    \newcommand{\figfamily}{4}
    \newcommand{\figtrapezia}{5}
    \newcommand{\figunfolding}{6}
    \newcommand{\figtworeflections}{7}
      \newtheorem{theor}{Theorem}
      \newtheorem{predl}{Proposition}
\author{Alena Aleksenko\thanks{Department of Mathematics, Aveiro
University, Aveiro 3810, Portugal} \and Alexander Plakhov\thanks{Aberystwyth University, Aberystwyth SY23 3BZ, UK, on leave from Department of Mathematics, University of Aveiro, Aveiro 3810-193, Portugal}
}
\title{Bodies of zero resistance and\\ bodies invisible in one direction}
\date{}
\begin{document}

\maketitle

\begin{abstract}
We consider a body in a parallel flow of non-interacting particles. The interaction of particles with the body is perfectly elastic.  We introduce the notions of a body of zero resistance, a body that leaves no trace, and an invisible body, and prove that all such bodies do exist.
\end{abstract}

\begin{quote}
{\small {\bf Mathematics subject classifications:} 37D50, 49Q10
}
\end{quote}

\begin{quote}
{\small {\bf Key words and phrases:}
Billiards, shape optimization, problems of minimal resistance, Newtonian aerodynamics, invisible bodies.}
\end{quote}

\section{Introduction}

Consider a parallel flow of point particles falling on a body at rest. The body is a bounded connected set with piecewise smooth boundary. The particles do not mutually interact; the interaction of particles with the body is perfectly elastic. That is, each particle initially moves freely, then makes one or several (maybe none) elastic reflections from the body's surface and finally, moves freely again. It is also assumed that the initial flow density is constant and the initial velocities of particles coincide; denote the initial velocity by $v_0$. There is created the pressure force of the flow on the body; it is usually called {\it resistance}. We shall also call it {\it resistance in the direction $v_0$}. The {\it problem of minimal resistance} is concerned with minimizing the resistance in a prescribed class of bodies. There is a large literature on this problem, starting from the famous Newton's aerodynamic problem \cite{N}.

For several classes of bodies under the so-called {\it single impact assumption} (each particle makes at most one reflection), the infimum of resistance is known to be positive \cite{N},\cite{BFK},\cite{BG}-
\cite{CL2}. For some other classes, where multiple reflections are allowed, the infimum equals zero \cite{P1,P2} and cannot be attained: the resistance of any particular body is nonzero.

One of the main results of this article is the demonstration that {\it there exist bodies of zero resistance}. This means that the final velocity of almost every particle of the flow coincides with the initial one.

We say that the body {\it leaves no trace} (or is {\it trackless}) {\it in the direction} $v_0$ if it has zero resistance in this direction and, additionally, the flow density behind the body is constant and coincides with the initial one.
Further, we say that the body is {\it invisible in the direction $v_0$} if the trajectory of each particle outside a prescribed bounded set coincides with a straight line. We prove that {\it there exist bodies leaving no trace and bodies invisible in one direction}.

The paper is organized as follows. In section 2, we introduce the mathematical notation and give rigorous definitions for bodies of zero resistance, bodies that leave no trace, and bodies that are invisible in one direction. Section 3 contains the historical overview of the minimal resistance problem. In section 4, we introduce families of zero resistance bodies, trackless bodies, and invisible bodies, discuss their properties, and state some open problems.

\section{Notation and definitions}

Let $\BBB \subset \RRR^3$ be a bounded connected set with piecewise
smooth boundary, and let $v_0 \in S^2$. ($\BBB$ and $v_0$ represent
the body and the flow direction, respectively.) Consider the
billiard in $\RRR^3 \setminus \BBB$. The scattering mapping $(x, v)
\mapsto (x_\BBB^+(x, v), v_\BBB^+(x, v))$ from a full measure subset
of $\RRR^3 \times S^2$ into $\RRR^3 \times S^2$ is defined as
follows. Let the motion of a billiard particle $x(t)$,\, $v(t)$
satisfy the relations $x(t) = \left\{\! \begin{array}{ll} x + vt, &
\!\text{if } \ t < t_1\\ x^+ + v^+ t, & \!\text{if } \ t > t_2
\end{array} \right.$ and
$v(t) = \left\{\! \begin{array}{ll} v, & \!\text{if } \ t < t_1\\
v^+, & \!\text{if } \ t > t_2 \end{array} \right.$ (here $t_1$,\,
$t_2$ are a pair of real numbers depending on the particular
motion); then $x^+ =: x_\BBB^+(x, v)$,\, $v^+ =: v_\BBB^+(x, v)$.

Denote $\tilde x_\BBB^+(x, v) = x^+ - \langle x^+, v^+ \rangle v^+$
and $t_\BBB^*(x, v) = -\langle x^+, v^+ \rangle$, where $\langle
\cdot \,, \cdot \rangle$ is the scalar product; then one has $x^+ + v^+ t =
\tilde x^+ + v^+ (t - t^*)$, and $\tilde x^+$ is orthogonal to
$v^+$. We also denote by $\{ v \}^\perp$ the orthogonal complement
to the one-dimensional subspace $\{ v \}$, that is, the plane that
contains the origin and is orthogonal to $v$. \vspace{2mm}

\hspace{-6.5mm}{\bf Definition.}

{\bf D$_1$}. {\it We say that $\BBB$ has {\rm zero resistance in the
direction $v_0$} if $v_\BBB^+(x, v_0) = v_0$ for almost every $x$.}

{\bf D$_2$}. {\it We say that the body $\BBB$ {\rm leaves no trace
in the direction $v_0$} if, additionally to {\rm D}$_1$, the mapping $x \mapsto
\tilde x_\BBB^+(x, v_0)$ from a subset of $\{ v_0 \}^\perp$ into $\{
v_0 \}^\perp$ is defined almost everywhere in $\{ v_0 \}^\perp$ and
preserves the two-dimensional Lebesgue measure}.

{\bf D$_3$}. {\it We say that $\BBB$ is {\rm invisible in the
direction $v_0$} if, additionally to {\rm D}$_2$, one has $\tilde
x_\BBB^+(x, v_0) = x$.} \vspace{2mm}

The condition D$_3$ is stronger than D$_2$, and D$_2$ is stronger
than D$_1$. One easily sees that if $\BBB$ is invisible/leaves no
trace in the direction $v_0$ then the same is true in the opposite
direction $-v_0$.

This definition is interpreted as follows. Suppose that there is a
parallel flow of non-interacting particles falling on $\BBB$.
Initially, the velocity of a particle equals $-v_0$; then it makes
several reflections from $\BBB$, and finally moves freely with the
velocity $v_\BBB^+(x, v_0)$, where $x$ indicates the initial
position of the particle. One can imagine that the flow is highly
rarefied or consists of light rays. (Equivalently, one can assume
that the body translates at the velocity $-v_0$ through a highly
rarefied medium of particles at rest.) The force of pressure of the
flow on the body (or the force of resistance of the medium to the
body's motion) is proportional to $R_{v_0}(\BBB) := \int_{\{ v_0
\}^\perp} (v_0 - v_\BBB^+(x, v_0))\, dx$, where the ratio equals
the density of the flow/medium and $dx$ means the Lebesgue measure
in $\{ v_0 \}^\perp$.

In the case D$_1$ one has $R_{v_0}(\BBB) = 0$. If the body has
mirror surface then in the case D$_3$ it is invisible in the
direction $v_0$. In the case D$_2$, if the body moves through a
rarefied medium, the medium seems to be unchanged after the body has
passed: the particles behind the body (actually, in the complement
of the body's convex hull) are at rest and are distributed with the
same density.

In section 4 we give examples of a body satisfying the condition D$_1$, but not satisfying D$_2$; a body satisfying D$_2$ but not D$_3$; and a body satisfying D$_3$. That is, there exists a body of zero resistance that leaves a trace (shown on Fig.\,{\fthreepictures}a); a body leaving no trace but not invisible (Fig.\,{\fthreepictures}b and {\fthreepictures}c); and an invisible body (Fig.\,{\figinvisible}).

\section{Problems of the body of minimal resistance}

The origin of least resistance problems goes back to the book {\it Principia} (1687) by Newton. Here we give an (incomplete) overview of these problems.

Consider a three-dimensional space $\RRR^3_{x_1,x_2,x_3}$ with orthogonal coordinates $x_1$,\, $x_2$,\, $x_3$, a two-dimensional set $\Om \subset \RRR^2_{x_1,x_2}$, and a positive number $h$.
\vspace{2mm}

1. Let $\Om$ be the unit circle $x_1^2 + x_2^2 \le 1$. Consider the class of bodies $\BBB$ that are (i) bounded from
above by a function $f: \Om \to [0,\, h]$ and (ii) such that any
billiard particle in $\RRR^3 \setminus \BBB$ with the initial
velocity $v_0 = (0, 0, -1)$ makes at most one reflection from $\pl
\BBB$. The condition (i) implies that $\BBB$
contains the graph $\{ (x_1, x_2, x_3): (x_1, x_2) \in \Om, \ x_3 =
f(x_1, x_2) \}$ and is contained in the subgraph $\{ (x_1, x_2,
x_3): (x_1, x_2) \in \Om, \ x_3 \le f(x_1, x_2) \}$. The condition (ii) is called {\it  single impact assumption};
under this assumption, $R_{v_0}$ allows a comfortable analytical
functional representation: $R_{v_0}(\BBB) = \int\!\!\!\int_\Om (1 +
|\nabla f|^2)^{-1} dx_1 dx_2$. Without loss of generality
one can assume that the body is given by the relation $0 \le x_3 \le
f(x_1, x_2)$.

There have been studied the minimization problem for the absolute value
of the third component of the vector $R_{v_0}(\BBB) =
(R_{v_0}^{(1)}, R_{v_0}^{(2)}, R_{v_0}^{(3)})$ in several subclasses
of this class of bodies. These subclasses, (1a)--(1d), are defined
by additional conditions on $f$.

(1a)~ $f$ is {\it concave} and {\it radial}. This problem was considered by Newton in \cite{N}; the optimal body is indicated there without a proof.

(1b)~ $f$ is {\it concave}. Thus, the class of bodies is larger than in the case 1a. The corresponding minimization problem has been studied since 1993 (see, e.g., \cite{BK}-\cite{LO}) and is not completely solved until now. The solution is known to exist and not coincide with the Newtonian one; at any points of its surface, the gaussian curvature either equals zero or does not exist. 
The solution has been obtained numerically in \cite{LO}.

(1c)~ $f$ is {\it concave} and {\it developable} \cite{LP1}. More precisely, the level set $\{ f = h \}$ is nonempty, and the smallest concave function $\tilde f$ such that $\{ \tilde f = h \} = \{ f = h \}$ coincides with $f$. The corresponding solution is given in \cite{LP1}.

(1d)~ $f$ is arbitrary (still under the {\it single impact assumption}); this problem has been considered in \cite{CL1,CL2}.

In all these cases, the infimum of resistance is positive,
$\inf_\BBB |R^{(3)}_{v_0}(\BBB)| > 0$. \vspace{2mm}

2. Consider the class of bodies $\BBB$ that are contained in the cylinder $\Om \times [0,\, h]$ and contain a cross section $\Om \times \{ c \}$,\, $c \in [0,\, h]$. For the sake of brevity, we shall call them {\it bodies inscribed in the cylinder}. Multiple reflections are allowed. If $\Om$ is the unit circle then the infimum of resistance equals zero, $\inf_\BBB |R_{v_0}(\BBB)| = 0$ (see \cite{P2}). The infimum is not attained, that is, zero
resistance bodies do not exist. This follows from the following simple proposition.

\begin{predl}\label{predl 1}
Let $\Om$ be a convex set with nonempty interior and let $\BBB$ be a body inscribed in the cylinder $\Om \times [0,\, h]$ and such that the integral $R_{v_0}(\BBB)$ exists. Then $R_{v_0}(\BBB) \ne 0$.
\end{predl}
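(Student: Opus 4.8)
The plan is to reduce everything to the component of resistance along the flow and then to exploit that the full cross-section $\Om\times\{c\}$ is an impenetrable obstacle occupying the whole column. First I would look only at the third component $R_{v_0}^{(3)}(\BBB)=\int_{\{v_0\}^\perp}\big(-1-(v_\BBB^+(x,v_0))_3\big)\,dx$, where $v_0=(0,0,-1)$. Since each reflection is elastic, speed is preserved, so $v_\BBB^+(x,v_0)\in S^2$ and its third coordinate is $\ge -1$; hence the integrand is everywhere $\le 0$. Consequently $R_{v_0}^{(3)}(\BBB)\le 0$, with equality if and only if $(v_\BBB^+(x,v_0))_3=-1$, i.e. $v_\BBB^+(x,v_0)=v_0$, for almost every $x$. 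In particular $R_{v_0}(\BBB)=0$ forces $v_\BBB^+(x,v_0)=v_0$ a.e.: almost every particle that meets the body leaves it moving straight down. It therefore suffices to rule this out; note also that, since $\BBB\subset\Om\times[0,h]$, the only particles that can interact are those whose footprint $x$ lies in $\Om$, so the whole question concerns the shadow $\Om$, which has positive area.

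Next I would extract the geometric consequence of the blocking cross-section. A shadow particle enters from $x_3=+\infty$ and, under the hypothesis, escapes to $x_3=-\infty$; its continuous, piecewise-linear trajectory must therefore meet the plane $x_3=c$. Since $\Om\times\{c\}\subset\BBB$ is solid, no trajectory can pass through that plane inside the column, so every such crossing occurs at a footprint lying in $\RRR^2\setminus\Om$. Thus, if $R_{v_0}(\BBB)=0$, the incoming parallel beam over $\Om$ is entirely expelled from the column before it can descend past height $c$, and is reassembled below into a downward beam. The plan is to contradict this by a degree/continuity argument applied to the exit map $g(x):=\tilde x_\BBB^+(x,v_0)$. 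Off a closed null set of singular (tangential or corner) trajectories this map is continuous, and it equals the identity for $x\notin\overline\Om$; hence it has topological degree one and its image covers $\mathrm{int}\,\Om$. But a trajectory whose outgoing ray has footprint in $\mathrm{int}\,\Om$ and runs to $x_3=-\infty$ would pierce the solid plate $\Om\times\{c\}$ — an impossibility — unless its last reflection occurs below height $c$.

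The main obstacle is exactly the control of multiple reflections: a particle may leave the column above the plate, swing around the outside, re-enter beneath the plate, and be turned downward there by lower portions of $\BBB$ (a reflection off a downward-facing surface, since one checks that the outgoing straight-down ray can only be produced on an overhang where the outward normal has negative third coordinate). Ruling out that the entire shadow beam is recombined in this way is the genuine content of the proposition, and it is here that the hypotheses are used: convexity of $\Om$ pins down the lateral boundary of the column through which particles must escape and re-enter, while connectedness of $\BBB$ together with the fact that the cross-section fills the whole of $\Om$ constrains where the redirecting lower surfaces can sit. I would close the gap by showing that these constraints are incompatible with $g$ being defined a.e. and of degree one simultaneously — equivalently, that a positive-measure set of shadow particles must retain a nonnegative third velocity component at exit, making the integrand of $R_{v_0}^{(3)}(\BBB)$ strictly negative on a set of positive measure and hence $R_{v_0}(\BBB)\ne 0$.
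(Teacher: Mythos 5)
Your opening reduction is sound and is exactly what the paper needs at the end: since $v_\BBB^+(x,v_0)\in S^2$, the integrand of $R^{(3)}_{v_0}(\BBB)=\int\big(-1-(v_\BBB^+(x,v_0))_3\big)\,dx$ is $\le 0$, with equality only where $v_\BBB^+(x,v_0)=v_0$; so it suffices to produce a positive-measure set of $x$ (the shadow $\Om$ will do) on which $v_\BBB^+(x,v_0)\ne v_0$. But from there you turn to a degree/continuity argument for the exit map and, by your own admission, stop short: you single out the recombination scenario --- a particle leaves the column above the plate, ``swings around the outside, re-enters beneath the plate'' --- as ``the genuine content of the proposition'' and only describe how you \emph{would} rule it out. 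That is a genuine gap, and it comes from missing the one observation that makes the difficulty evaporate. The cylinder $K=\Om\times[0,h]$ is convex, so any straight line meets it in a single interval; all reflection points lie on $\pl\BBB\subset K$; hence the first time a trajectory leaves $K$, its current straight segment never returns to $K$ and meets no further obstacle, i.e.\ the particle moves freely from that moment on. Re-entry is impossible outright, because it would require a reflection outside $K$ and there is nothing outside $K$ to reflect off. This is precisely where the convexity hypothesis is used, and it is the paper's entire proof.

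With that in hand the argument closes in two lines: a particle with footprint $x\in\Om$ must hit the body (the solid cross-section $\Om\times\{c\}$ blocks its vertical path), remains in $K\cap\{x_3\ge c\}$ for as long as it is in $K$, and must eventually leave $K$ since it ends in free motion and $K$ is bounded. It therefore exits through the top face (forcing $(v^+)_3>0$) or transversally through the lateral surface (forcing a nonzero horizontal component of $v^+$); either way $v_\BBB^+(x,v_0)\ne v_0$ for almost every $x\in\Om$, so $R^{(3)}_{v_0}(\BBB)<0$. I would also caution that the degree step you propose is shakier than it looks even as a partial tool: continuity of $x\mapsto\tilde x_\BBB^+(x,v_0)$ off a closed null set is not automatic, and the topological degree of a map defined only almost everywhere is not well defined without further work. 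Discard that route and insert the convexity observation; the rest of your write-up then assembles into a correct proof essentially identical to the paper's.
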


\begin{proof}
The integral $R_{v_0}(\BBB)$ exists, that is, the function
$v_\BBB^+(x, v_0)$ is defined for almost all $x \in \Om$ and is
measurable. Using that the particle trajectory does not intersect
the section $\Om \times \{ c \}$ and  $\Om$ is convex, one concludes
that the particle initially moves in the cylinder above this
section, then intersects the lateral surface of the cylinder and
moves freely afterwards. This implies that $v_\BBB^+(x, v_0) \ne
v_0$, hence $R_{v_0}(\BBB) \ne 0$.
\end{proof}

The assumption that $\Om$ is convex cannot be omitted; in the next section there will be provided examples of zero resistance bodies for the cases where $\Om$ is a ring or a special kind of polygon with mutually orthogonal sides (see Theorem \ref{theor 2}).

\section{Zero resistance bodies and invisible bodies}

The main result of this paper is the following theorem. Fix $v_0 \in S^2$.

\begin{theor}\label{theor 1}
There exist (a) a body that has zero resistance in the direction $v_0$ but leaves a trace; (b) a body that leaves no trace in the direction $v_0$ but is not invisible; (c) a body invisible in the direction $v_0$.
\end{theor}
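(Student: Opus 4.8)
The plan is to realize all three bodies as systems of parabolic mirrors, exploiting the focusing property of the parabola, and to reduce the analysis to a single planar cross-section that is afterwards extruded in the third direction. Fix the $x_3$-axis along $v_0$ and work in the $(x_1,x_3)$-plane. The building block is a \emph{confocal pair}: a concave-up parabolic arc $P_1$ whose axis is parallel to $v_0$ and whose focus is $F$, positioned so that the incoming beam (velocity $v_0$) meets its concave side, together with a concave-down arc $P_2$ sharing the same focus $F$ and the same axis direction. First I would prove the basic lemma: a particle entering at transverse coordinate $x_1=\xi$ is reflected by $P_1$ onto the line through $F$ (the defining focal property), and after passing $F$ it is reflected by $P_2$ back to a ray whose velocity is again \emph{exactly} $v_0$; tracking the two reflection laws gives the transverse exit coordinate $\xi \mapsto -(p_2/p_1)\,\xi$, where $p_1,p_2$ are the focal parameters. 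The exit velocity equals $v_0$ for every choice of parameters, so D$_1$ holds automatically; only the transverse map changes with the geometry.

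From this one formula all three cases follow. Taking $p_1\neq p_2$ gives, after extrusion, the transverse map $(x_1,x_3)\mapsto(-(p_2/p_1)\,x_1,\,x_3)$, a genuine dilation whose Jacobian has absolute value $p_2/p_1\neq1$; this destroys the area-preservation demanded by D$_2$ while keeping D$_1$, so it furnishes a zero-resistance body that leaves a trace --- part (a). Taking $p_1=p_2$ makes the map $(x_1,x_3)\mapsto(-x_1,x_3)$, a reflection with unit Jacobian which is an involution of the affected slab onto itself: it preserves two-dimensional Lebesgue measure but is not the identity, giving D$_2$ without D$_3$ --- part (b). Finally, mounting two equal-parameter confocal pairs coaxially in series composes two copies of $x_1\mapsto -x_1$ into the identity, so the exit line coincides with the entrance line, $\tilde x_\BBB^+(x,v_0)=x$, and D$_3$ holds --- part (c).

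The remaining, and main, work is to turn these ray diagrams into honest bounded connected bodies with piecewise-smooth boundary and to verify the billiard dynamics. The central difficulty is that a downward beam can be focused only by a concave-up arc, so the first reflection necessarily throws the beam \emph{back upward} toward $F$ and $P_2$ must lie above $F$; one must therefore shape the solid so that the returning beam, the incoming beam, and the several mirror pieces never obstruct one another. Here I would exploit the sign in the lemma: since for $p_1=p_2$ the map carries $x_1>0$ to $x_1<0$, I would place $P_1$ only over the half $x_1>0$ and $P_2$ only over $x_1<0$, so that the entrance aperture (right) and the exit aperture (left) occupy disjoint columns, the focal segment crossing the central gap freely; each arc then becomes one face of a solid slab, the two slabs are joined by a thin connector placed outside the beam at large $|x_3|$ after extrusion, and the whole is truncated to a bounded box. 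I expect the hard part to be precisely the dynamical bookkeeping around this blocking issue: proving that every particle of the entrance slab makes exactly the two intended reflections off the concave sides and then escapes to infinity without re-entering, and --- most delicately --- arranging matters (by splitting the aperture and using the extrusion direction, or by interleaving a symmetric second pair) so that the particles falling on the complementary half are themselves processed correctly rather than scattered, with only a set of initial points of measure zero (edges, connectors, truncation walls) left over. For parts (a) and (b) one may sidestep the focusing geometry entirely by replacing each confocal pair with a channel of flat right-angled mirrors that translates sub-beams and swaps two congruent strips, reducing the dynamics to elementary reflections; this matches the orthogonal-sided constructions used elsewhere in the paper and makes the verification routine. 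Once the relevant particles are shown to be processed as designed and the far-away particles to pass straight through, the global transverse map is the identity off a bounded symmetric set and the designed involution, dilation, or identity on it, yielding D$_2$ and D$_3$ for (b) and (c) and the failure of D$_2$ for (a), which completes Theorem \ref{theor 1}.
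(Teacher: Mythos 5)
Your overall architecture is the same as the paper's --- a two--reflection cell that restores the velocity $v_0$, classification of (a), (b), (c) by the measure--theoretic behaviour of the induced transverse map, and composition of two cells to force the identity for (c) --- but the realization by confocal parabolic arcs has a gap that you yourself flag and then do not close, and it is not a routine detail: it is the crux. In your configuration $P_1$ throws the beam \emph{upward} through the focus $F$, so the recollimating arc $P_2$ must sit above $F$ over the exit columns $x_1<0$; the incoming particles in those very columns then strike the convex back of the $P_2$ slab and are scattered after a single reflection, so $v^+_\BBB(x,v_0)\ne v_0$ on a set of positive measure and even D$_1$ fails for the whole body. None of the remedies you sketch resolves this: interleaving in the extrusion direction leaves the $x_1<0$ part of each slab blocked, and adding a mirror--image second pair places its exit mirror over the entrance columns of the first, reproducing the obstruction. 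The paper's flat--mirror construction (two congruent triangles with $60^\circ$ faces) dissolves the problem structurally: the beam is always deflected \emph{downward and sideways}, and each inner face serves simultaneously as first mirror for the sub--beam entering above it and as second mirror for the sub--beam arriving from the opposite face, so the entire aperture is processed with nothing left over. A confocal--parabola version could be salvaged by imitating exactly this (using only the outer wings $|x_1|>2p$ of the parabolas, where the reflected rays travel downward, and making each slab's upper and lower faces arcs for two different foci), but that construction is not the one you describe and would have to be carried out.

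There is a second, independent gap in part (a). Your fallback of ``a channel of flat right-angled mirrors that translates sub-beams and swaps two congruent strips'' is a piecewise translation of $\{v_0\}^\perp$, hence preserves two-dimensional Lebesgue measure; by D$_2$ such a body leaves \emph{no} trace, so it cannot witness part (a). A trace requires the transverse map to distort measure, which you can only get either from your dilation $\xi\mapsto-(p_2/p_1)\xi$ with $p_1\ne p_2$ (blocked by the obstruction issue above) or, as the paper does, from rotational symmetry: rotating the flat-mirror cell about the axis sends the annulus of radius $r$ and width $dr$ to the annulus of radius $6-r$ and the same width, whose area differs by the factor $(6-r)/r$. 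Parts (b) and (c) would survive on your flat-mirror fallback once connectivity is arranged, but part (a) as proposed does not go through.
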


\begin{proof}

(a) Consider two identical coplanar equilateral triangles $\rm ABC$ and $\rm A'B'C'$, with $\rm C$ being the midpoint of the segment $\rm A'B'$, and $\rm C'$, the midpoint of $\rm AB$. The vertical line $\rm CC'$ is parallel to $v_0$. Let $\rm A''$ ($\rm B''$) be the point of intersection of segments $\rm AC$ and $\rm A'C'$ ($\rm BC$ and $\rm B'C'$, respectively); see Fig.\,{\figbasicconstruction}.
\begin{figure}[h]
\begin{picture}(0,220)
\scalebox{0.9}{
\rput(8.7,4){
\pspolygon[linestyle=dashed,linewidth=0.3pt](-4,-3.4641)(4,-3.4641)(0,3.4641)
\pspolygon[linestyle=dashed,linewidth=0.3pt](-4,3.4641)(4,3.4641)(0,-3.4641)
 \psline[linecolor=red,arrows=->,arrowscale=1.8](-4,4.5)(-4,3.6)
\psline[linecolor=red,arrows=->,arrowscale=1.8](-4,4.5)(-4,3.4641)(1.88,0.069282)
\psline[linecolor=red,arrows=->,arrowscale=1.8](-4,3.4641)(2,0)(2,-4.25)
 \psline[linecolor=red,arrows=->,arrowscale=1.8](-2,4.5)(-2,0.15)
\psline[linecolor=red,arrows=->,arrowscale=1.8](-2,4.5)(-2,0)(4,-3.4641)(4,-4.2)
 \psline[linecolor=red,arrows=->,arrowscale=1.8](-3.5,4.5)(-3.5,2.75)
\psline[linecolor=red,arrows=->,arrowscale=1.8](-3.5,4.5)(-3.5,2.598075)(1.6,-0.34641)
\psline[linecolor=red,arrows=->,arrowscale=1.8](-3.5,2.598075)(2.5,-0.866025)(2.5,-4.25)
 \psline[arrows=<->,arrowscale=1,linewidth=0.6pt](-4,4.3)(-3.5,4.3)
 \rput(-3.73,4.52){\small $dx$}
  \psline[arrows=<->,arrowscale=1,linewidth=0.6pt](2,-3.59)(2.5,-3.59)
 \rput(2.27,-3.77){\small $dx$}
\pspolygon[fillstyle=solid,fillcolor=lightgray](-4,-3.4641)(-4,3.4641)(-2,0)
\pspolygon[fillstyle=solid,fillcolor=lightgray](4,-3.4641)(4,3.4641)(2,0)
\rput(-4.25,-3.4641){A}
\rput(-4.3,3.3){A$'$}
  \rput(-3.65,2.43){\small E}
\rput(4.27,-3.3){B}
\rput(4.25,3.464){B$'$}
\rput(0,3.75){C}
\rput(0,-3.75){C$'$}
\rput(-2.4,0){\small A$''$}
\rput(2.4,0){\small B$''$}
   \rput(2.7,-0.8){\small F}
 \psline[arrows=->,arrowscale=1.8](-5,4.5)(-5,3.46)
 \rput(-5.35,4){$v_0$}
}
}
\end{picture}
\label{fig basic construction} \caption{The basic construction.}
\end{figure}
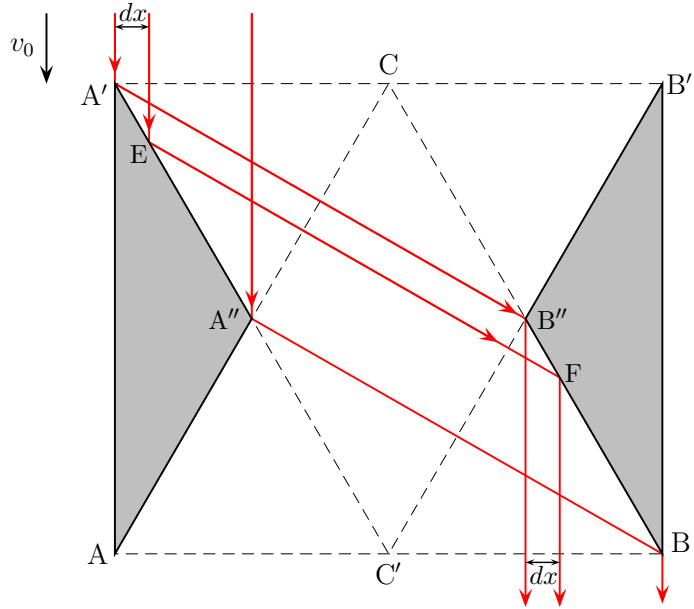
The body $\BBB$ generated by rotation of the triangle $\rm AA'A''$ (or $\rm BB'B''$) around the
axis $\rm CC'$ is shown on Fig.\,{\fthreepictures}a. It has zero resistance in the direction $v_0$. This can be better seen from Figure {\figbasicconstruction} representing a vertical central cross section of $\BBB$.

\begin{figure}[h]
\begin{picture}(0,130)
  \scalebox{0.8}{
  \rput(3.1,2.3){
   \psellipse[linewidth=1.2pt](0,-1.73205)(2,0.5)
   \psframe[linecolor=white,fillstyle=solid,fillcolor=white,linewidth=0.6pt](-2,-1.73205)(2,-1.2)
      \psellipse[linestyle=dashed,linewidth=0.6pt](0,-1.73205)(2,0.5)
         \psellipse[linewidth=1.2pt](0,1.73205)(2,0.5)
 \psline[linewidth=1.2pt](-2,-1.73205)(-2,1.73205)
 \psline[linestyle=dashed,linewidth=0.6pt](-2,-1.73205)(-1,0)
 \psline[linestyle=dashed,linewidth=0.6pt](-1,0)(-2,1.73205)
     \psellipse[linestyle=dashed,linewidth=0.6pt](0,0)(1,0.25)
  \psline[linewidth=1.2pt](2,-1.73205)(2,1.73205)
  \psline[linestyle=dashed,linewidth=0.6pt](2,-1.73205)(1,0)
  \psline[linestyle=dashed,linewidth=0.6pt](1,0)(2,1.73205)
}}
 \scalebox{0.5}{ \rput(15.6,4){
\pspolygon[fillstyle=solid,fillcolor=gray,linewidth=1.6pt](-4,-3.4641)(-4,3.4641)(-2,0)
   \pspolygon[fillstyle=solid,fillcolor=lightgray,linewidth=1.6pt](-4,3.4641)(-2,0)(-1.1,0.3)(-2.8,3.2641)
   \pspolygon[fillstyle=solid,fillcolor=lightgray,linewidth=1.6pt](-4,-3.4641)(-2,0)(-1.1,0.3)(-2.8,-2.6641)
\pspolygon[fillstyle=solid,fillcolor=gray,linewidth=1.6pt](4,-3.4641)(4,3.4641)(2,0)
   \pspolygon[fillstyle=solid,fillcolor=lightgray,linewidth=1.6pt](4,-3.4641)(4,3.4641)(4.9,3.2641)(4.9,-2.6641)
} }
 \rput(0,0.8){(a)}
  \rput(5.1,0.8){(b)}
    \rput(11.6,0.8){(c)}
   \scalebox{0.5}{
   \rput(26.8,4){
   \psframe[linewidth=0.6pt,fillstyle=solid, fillcolor=yellow](-2,-4)(-1.5,0)
   \psframe[linewidth=0.6pt,fillstyle=solid, fillcolor=yellow](0,0)(-0.5,-4)
   \psline[linewidth=2pt](-1.5,-4)(-2,-4)(-2,0)(-1.5,0)
   \psline[linewidth=2pt](-0.5,-4)(0,-4)(0,0)(-0.5,0)
      \psframe[linewidth=0.6pt,fillstyle=solid, fillcolor=green](0,-2)(4,-1.5)
      \psframe[linewidth=0.6pt,fillstyle=solid, fillcolor=green](0,0)(4,-0.5)
   \psline[linewidth=2pt](0,-1.5)(0,-2)(4,-2)(4,-1.5)
   \psline[linewidth=2pt](4,-0.5)(4,0)(0,0)(0,-0.5)
      \psframe[linewidth=0.6pt,fillstyle=solid, fillcolor=pink](2,0)(1.5,4)
      \psframe[linewidth=0.6pt,fillstyle=solid, fillcolor=pink](0,0)(0.5,4)
   \psline[linewidth=2pt](1.5,0)(2,0)(2,4)(1.5,4)
   \psline[linewidth=2pt](0.5,4)(0,4)(0,0)(0.5,0)
        \psframe[linewidth=0.6pt,fillstyle=solid, fillcolor=blue](0,2)(-4,1.5)
        \psframe[linewidth=0.6pt,fillstyle=solid, fillcolor=blue](0,0)(-4,0.5)
   \psline[linewidth=2pt](0,1.5)(0,2)(-4,2)(-4,1.5)
   \psline[linewidth=2pt](-4,0.5)(-4,0)(0,0)(0,0.5)
   }}
\end{picture}
\label{f 3 pictures} \caption{(a) A rotationally symmetric body of zero
resistance. (b) A disconnected set leaving no trace. (c) The union of 4 sets identical to the one shown on fig.\,(b), the above view. It is simply connected and leaves no trace.}
\end{figure}
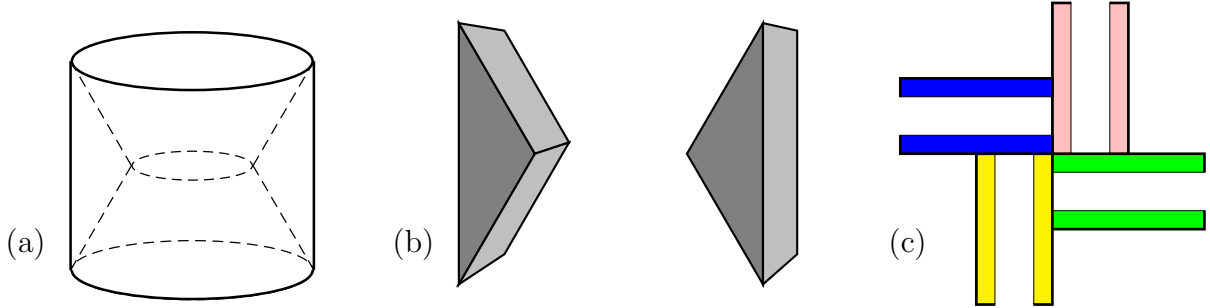

If a particle initially belongs to this cross section, it will never leave it. Let the particle first hit the segment ${\rm A}'{\rm A}''$ at a point E. (If the particle first hits ${\rm B}'{\rm B}''$, the argument is the same.) After the reflection, the direction of motion forms the angle $\pi/3$ with the vertical. Next, the particle hits the segment ${\rm B}''{\rm B}$ at the point F such that $|{\rm A}'{\rm E}| = |{\rm B}''{\rm F}|$, and after the second reflection moves vertically downward. That is, the final velocity equals $v_0$.

However, this body {\it does} leave a trace (and therefore is {\it not} invisible). Indeed, the particles that initially belong to a larger cylindrical layer of width $dx$ (on Fig.\,{\figbasicconstruction} above), after two reflections get into a smaller layer of the same width $dx$ (Fig.\,{\figbasicconstruction} below), and vice versa. Therefore, the density of the smaller layer gets larger below the body, and the density of the larger layer gets smaller. If $dx$ is small then increase and decrease of the density is twofold.

(b) A set generated by translating the pair of triangles ${\mathrm A}{\mathrm A'}{\mathrm A''}$ and ${\mathrm B}{\mathrm B'}{\mathrm B''}$ along a segment orthogonal to their plane leaves no trace in the vertical direction $v_0$, but is not invisible. It is disconnected; however, by "gluing together" 4 copies of this set along the vertical faces, one can get a {\it connected} set (that is, a true body) leaving no trace. Figure {\fthreepictures}c provides the above view of the resulting body.

(c) A body invisible in the direction $v_0$ can be obtained by doubling a zero resistance body; see Fig.\,{\figinvisible}.

Note that interior of this body is a disjoint union of two domains; this property can be undesirable. However, the construction can be improved as follows.

Consider a coordinate system $Ox_1x_2x_3$ such that the $x_3$-axis coincides with the symmetry axis of the body $\BBB$ shown on Fig.\,{\fthreepictures}a, the upper half-space contains the body, and $v_0 = (0, 0, -1)$. Consider the body $\BBB'$ symmetric to $\BBB$ with respect to the horizontal plane $x_3 = 0$ and suppose that the distance dist$(\BBB, \BBB') =: \ve$ is small. Next, take the intersection of $\BBB \cup \BBB'$ with the set $x_1 x_2 \ge 0$ (this intersection is the disjoint union of 4 connected sets) and shift it vertically up or down on $2\ve$. The union of the shifted set with the remaining set $(\BBB \cup \BBB') \cap \{ x_1 x_2 \le 0 \}$ is connected, that is, it is a true body invisible in the direction $v_0$.

\begin{figure}[h]
\begin{picture}(0,230)
\scalebox{1}{ \rput(8,2.3){
   \psellipse[linewidth=1.2pt](0,-1.73205)(2,0.5)
   \psframe[linecolor=white,fillstyle=solid,fillcolor=white,linewidth=0.6pt](-2,-1.73205)(2,-1.2)
      \psellipse[linestyle=dashed,linewidth=0.6pt](0,-1.73205)(2,0.5)
         \psellipse[linewidth=1.2pt](0,1.73205)(2,0.5)
   \psframe[linecolor=white,fillstyle=solid,fillcolor=white](-2,1.73205)(2,2.3)
      \psellipse[linestyle=dashed,linewidth=0.6pt](0,1.73205)(2,0.5)
 \psline[linewidth=1.2pt](-2,-1.73205)(-2,1.73205)
 \psline[linestyle=dashed,linewidth=0.6pt](-2,-1.73205)(-1,0)(-2,1.73205)
   \psellipse[linewidth=1.2pt](0,5.19615)(2,0.5)
     \psellipse[linestyle=dashed,linewidth=0.6pt](0,3.4641)(1,0.25)
     \psellipse[linestyle=dashed,linewidth=0.6pt](0,0)(1,0.25)
  \psline[linewidth=1.2pt](2,-1.73205)(2,1.73205)
  \psline[linestyle=dashed,linewidth=0.6pt](2,-1.73205)(1,0)(2,1.73205)
     \psline[linewidth=1.2pt](-2,1.73205)(-2,5.19615)
     \psline[linestyle=dashed,linewidth=0.6pt](-2,1.73205)(-1,3.4641)(-2,5.19615)
 \psline[linewidth=1.2pt](2,1.73205)(2,5.19615)
 \pspolygon[linestyle=dashed,linewidth=0.6pt](2,1.73205)(1,3.4641)(2,5.19615)
 \psline[linecolor=red,arrows=->,arrowscale=1.8](-1.75,6.2)(-1.75,5.3)
\psline[linecolor=red,linestyle=dashed,arrows=->,arrowscale=1.8](-1.75,5.2)(-1.75,4.763138)(0.74,3.34)
\psline[linecolor=red,linestyle=dashed,arrows=->,arrowscale=1.8](0.8,3.3)(1.25,3.0310875)(1.25,1.4)
 \psline[linecolor=red,linestyle=dashed,arrows=->,arrowscale=1.8](1.25,1.33)(1.25,0.4330125)(-0.29,-0.46)
 \psline[linecolor=red,linestyle=dashed](-0.29,-0.46)(-1.75,-1.299038)(-1.75,-1.9)
 \psline[linecolor=red,arrows=->,arrowscale=1.8](-1.75,-2)(-1.75,-2.7)
 \psline[arrows=->,arrowscale=1.8](-2.75,5.8)(-2.75,4.8391)
 \rput(-3.1,5.4){$v_0$}
} }
\end{picture}
\label{fig invisible} \caption{A body invisible in the direction $v_0$. It
is obtained by taking 4 truncated cones out of the cylinder.}
\end{figure}
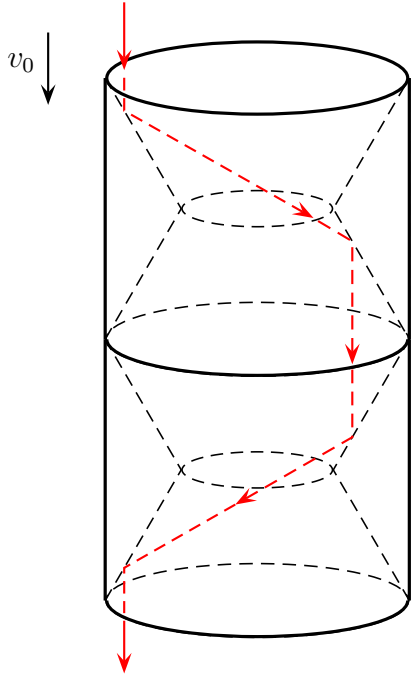

\end{proof}

Let us introduce some families of bodies having the desired properties. First, consider a pair of isosceles triangles with the angles $\al$,\, $\al$, and $\pi - 2\al$, where $0 < \al < \pi/4$. The triangles are symmetric to each other with respect to a certain point. This point lies on the symmetry axis of each triangle, at the distance $(\tan 2\al - \tan\al)/2$ from its obtuse angle and at the distance $(\tan 2\al + \tan\al)/2$ from its base. The length of the base of each triangle equals 2.
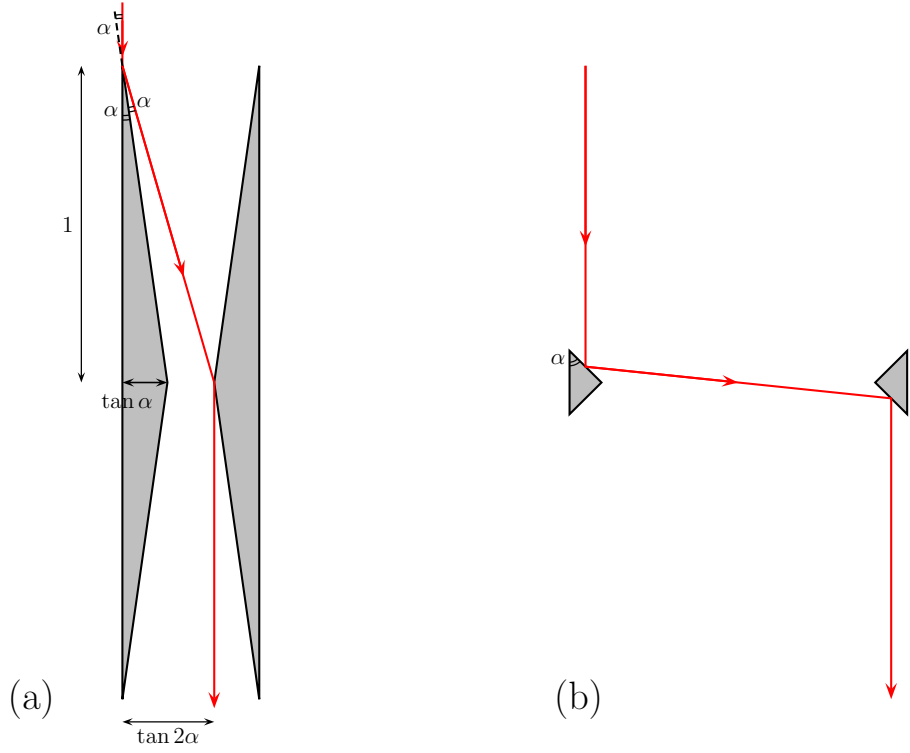
\begin{figure}[h]
\begin{picture}(0,290)
\scalebox{0.6}{
\rput(7,8){
\pspolygon[fillstyle=solid,fillcolor=lightgray,linewidth=1.3pt](-1.5,-7)(-1.5,7)(-0.513,0)
\pspolygon[fillstyle=solid,fillcolor=lightgray,linewidth=1.3pt](1.5,-7)(1.5,7)(0.513,0)
 \psline[linecolor=red,arrows=->,arrowscale=1.8,linewidth=1.3pt](-1.5,8.4)(-1.5,7.2)
\psline[linecolor=red,arrows=->,arrowscale=1.8,linewidth=1.3pt](-1.5,8)(-1.5,7)(-0.158,2.3333)
\psline[linecolor=red,arrows=->,arrowscale=1.8,linewidth=1.3pt](-1.5,7)(0.513,0)(0.513,-7.2)
\psline[linestyle=dashed,linewidth=1.3pt](-1.5,7)(-1.6691,8.2)
\psline[linewidth=1pt,arrows=<->,arrowscale=1.5](-1.5,0)(-0.513,0)
\rput(-1.4,-0.4){\large $\tan\al$}
\psline[linewidth=0.8pt,arrows=<->,arrowscale=1.5](-1.5,-7.5)(0.513,-7.5)
\rput(-0.5,-7.8){\large $\tan 2\al$}
\psline[linewidth=0.8pt,arrows=<->,arrowscale=1.5](-2.4,0)(-2.4,7)
\rput(-2.7,3.5){\large 1}
\psarc(-1.5,7){1.05}{90}{98.02}
\psarc(-1.5,7){1.13}{90}{98.02}
\rput(-1.9,7.8){\large $\al$}
\psarc(-1.5,7){1.1}{270}{278.02}
\psarc(-1.5,7){1.2}{270}{278.02}
 \psarc(-1.5,7){0.93}{278.02}{286.04}
\psarc(-1.5,7){1.03}{278.02}{286.04}
\rput(-1.75,6){\large $\al$}
\rput(-1.025,6.2){\large $\al$}
\rput(-3.5,-7){\Huge (a)}
}
\rput(19,8){
\pspolygon[fillstyle=solid,fillcolor=lightgray,linewidth=1.3pt](3,0)(3.7,0.7)(3.7,-0.7)
\pspolygon[fillstyle=solid,fillcolor=lightgray,linewidth=1.3pt](-3,0)(-3.7,0.7)(-3.7,-0.7)
 \psarc(-3.7,0.7){0.33}{-90}{-45}
  \psarc(-3.7,0.7){0.25}{-90}{-45}
  \rput(-3.95,0.53){\large $\al$}
 \psline[linecolor=red,arrows=->,arrowscale=1.8,linewidth=1.3pt](-3.35,7)(-3.35,3)
 \psline[linecolor=red,arrows=->,arrowscale=1.8,linewidth=1.3pt](-3.35,7)(-3.35,0.35)(0,0)
 \psline[linecolor=red,arrows=->,arrowscale=1.8,linewidth=1.3pt](-3.35,0.35)(3.35,-0.35)(3.35,-7)
 \rput(-3.5,-7){\Huge (b)}
}
}
\end{picture}
\label{fig family} \caption{The central vertical cross section of the body $\BBB_\al\,$ (a) with small $\al$;
(b) with $\al$ close to $\pi/4$.}
\end{figure}
On Fig.\,{\figfamily} there are depicted two pairs of triangles, with $\al$ small and $\al$ close to $\pi/4$.

As seen from the picture, this definition guarantees zero resistance in the direction $v_0$ parallel to the bases of the triangles. The zero resistance body, trackless body, and invisible body are created, respectively, by the procedures of rotation, translation with gluing, and doubling, applied to the pair of triangles.

Consider the one parameter family of zero resistance bodies $\BBB_\al$ obtained by rotation of the pair of triangles. It contains the body $\BBB = \BBB_{\pi/6}$ constructed above. Before studying the properties of this family, introduce the following definition.

For a body $\cal D$, let $\kappa(\cal D)$ be the relative volume of $\cal D$ in its convex hull, that is, $\kappa(\cal D) := \text{Vol}(\cal D)/\text{Vol}(\text{Conv}\cal D)$. One obviously has $0 < \kappa({\cal D}) \le 1$, and $\kappa({\cal D}) = 1$ {\it iff} $\cal D$ is convex.

The convex hull of $\BBB_\al$ is a cylinder of radius $L_\al = (\tan 2\al + \tan\al)/2$ and height $H = 2$; denote by $h_\al$ its relative height, $h_\al = H/L_\al$. One has Vol$(\BBB_\al) = \pi \tan\al (\tan 2\al + \tan\al/3)$. Now one easily derives the asymptotic relations for $h_\al$ and $\kappa_\al = \kappa(\BBB_\al)$: as $\al \to 0$, one has $h_\al = \frac{4}{3\al} (1 + o(1)) \to \infty$ and $\kappa_\al \to 14/27 \approx 0.52$. For $\al = \pi/6$, one has $h_{\pi/6} = \sqrt 3$ and $\kappa_{\pi/6} = 5/12 \approx 0.42$. Taking $\al = (\pi - \ve)/4$,\, $\ve \to 0^+$, one gets $h_{(\pi - \ve)/4} = 2\ve (1 + o(1))$ and $\kappa_\al = \ve (1 + o(1))$. \vspace{2mm}

Now consider a more general construction based on the union of two isosceles trapezia ${\mathrm A}{\mathrm B}{\mathrm C}{\mathrm D}$ and ${\mathrm A}'{\mathrm B}'{\mathrm C}'{\mathrm D}'$ (see
Fig.\,{\figtrapezia}).
\begin{figure}[h]
\begin{picture}(0,250)
\scalebox{1}{
\rput(7.7,4.2){
    \psline[linecolor=red,arrows=->,arrowscale=1.8](-1.176,4.8)(-1.176,4)
  \psline[linecolor=red,arrows=->,arrowscale=1.8](-1.176,4.8)(-1.176,3.61936)(0.20355,1.721)
  \psline[linecolor=red,arrows=->,arrowscale=1.8](-1.176,3.61936)(0.447,1.386)(-0.4016,1.1043)
  (0.4016,0.83735)(-0.4016,0.5704)(0.4016,0.30345)(-0.4016,0.0365)(0.4016,-0.23045)(-0.4016,-0.4974)(0.4016,-0.76435)
  (-0.4016,-1.0313)(0.42,-1.31)(-0.192,-2.154)
  \psline[linecolor=red,arrows=->,arrowscale=1.8](0.42,-1.31)(-1.11,-3.42)(-1.11,-4.4)
    \pspolygon[fillstyle=solid,fillcolor=lightgray](0.4016,1.23608)(1.29968,4)(1.29968,-4)(0.4016,-1.23608)
    \pspolygon[fillstyle=solid,fillcolor=lightgray](-0.4016,1.23608)(-1.29968,4)(-1.29968,-4)(-0.4016,-1.23608)
 \psline[linestyle=dashed,linewidth=0.3pt](0.4016,1.23608)(-0.4016,1.23608)
  \psline[linestyle=dashed,linewidth=0.3pt](0.4016,-1.23608)(-0.4016,-1.23608)
 \rput(1.55,4){\small B$'$}
  \rput(-1.5,4){\small B}
   \rput(1.55,-4){\small A$'$}
  \rput(-1.5,-4){\small A}
   \rput(0.65,1.2){\small C$'$}
  \rput(-0.65,1.2){\small C}
     \rput(0.65,-1.15){\small D$'$}
  \rput(-0.65,-1.2){\small D}
   \psarc(-1.29968,4){0.67}{270}{288}
   \psarc(-1.29968,4){0.6}{270}{288}
   \rput(-1.5,3.4){\small $\al$}
}
}
\end{picture}
\label{fig trapezia} \caption{The vertical cross section of a zero resistance body of revolution.}
\end{figure}
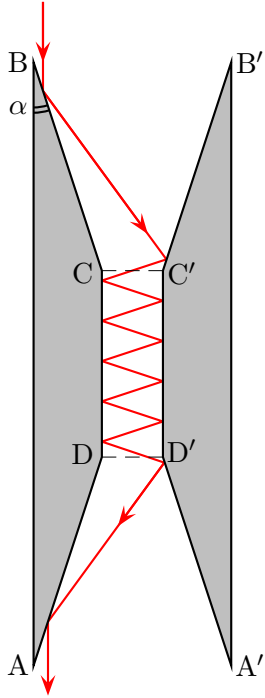
Denote by ${\mathrm E}$ the point of intersection of the lines
${\mathrm B}{\mathrm C}$ and ${\mathrm B}'{\mathrm C}'$, and take a
billiard trajectory in $\RRR^2 \setminus ({\mathrm A}{\mathrm
B}{\mathrm C}{\mathrm D} \cup {\mathrm A}'{\mathrm B}'{\mathrm
C}'{\mathrm D}')$ with the vertical initial velocity. The unfolding of the trajectory is generated
by a sequence of reflections from the lines ${\mathrm E}{\mathrm B}$
and ${\mathrm E}{\mathrm B}'$ and from their images under the
previous reflections. Let the points ${\mathrm C}$ and ${\mathrm
C}'$ be chosen in such a way that the broken line $\ldots {\mathrm
C}_2{\mathrm C}_1{\mathrm C}{\mathrm C}'{\mathrm C}'_1{\mathrm C}'_2
\ldots$ formed by the successive reflections of the segment
${\mathrm C}{\mathrm C}'$ touches the lines ${\mathrm A}{\mathrm B}$
and ${\mathrm A}'{\mathrm B}'$ (see Fig.\,{\figunfolding}a).
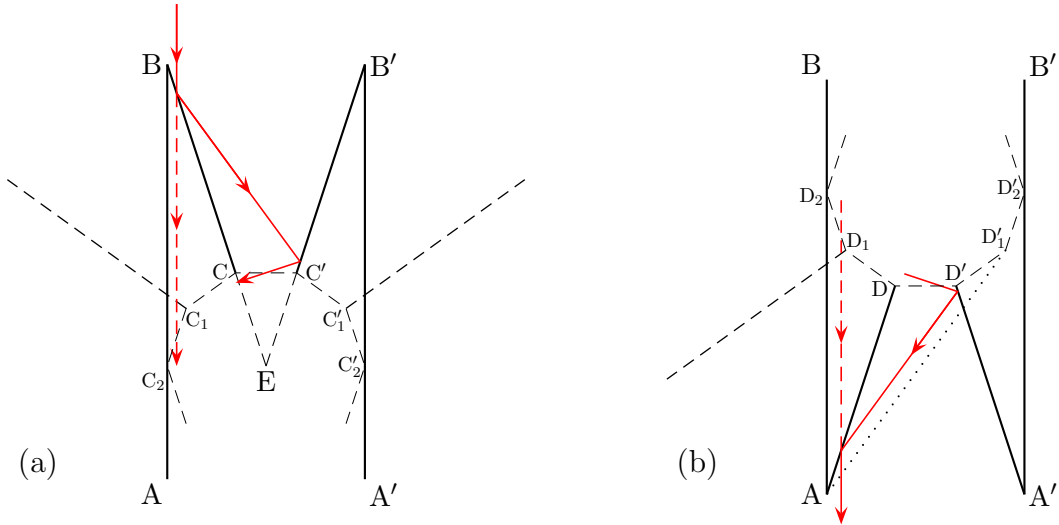
\begin{figure}[h]
\begin{picture}(0,160)
\scalebox{1}{
\rput(4,2){
\psline(0.4016,1.23608)(1.29968,4)
\psline(-0.4016,1.23608)(-1.29968,4)
  \psline[linestyle=dashed,linewidth=0.5pt](1.05145,0.76396)(3.40261,2.47215)
  \psline[linestyle=dashed,linewidth=0.5pt](-1.05145,0.76396)(-3.40261,2.47215)
     \psline[linestyle=dashed,linewidth=0.4pt](0,0)(0.4016,1.23608)
     \psline[linestyle=dashed,linewidth=0.4pt](0,0)(-0.4016,1.23608)
       \rput(0,-0.2){\small E}
    \psline[linewidth=0.8pt](1.29968,4)(1.29968,-1.5)
       \psline[linewidth=0.8pt](-1.29968,4)(-1.29968,-1.5)
    \psline[linecolor=red,linewidth=0.6pt,arrows=->,arrowscale=1.8](-1.176,4.8)(-1.176,4)
     \psline[linecolor=red,linewidth=0.6pt](-1.176,4.8)(-1.176,3.61936)
    \psline[linestyle=dashed,linecolor=red,linewidth=0.6pt,arrows=->,arrowscale=1.8](-1.176,3.61936)(-1.176,1.8)
  \psline[linecolor=red,linestyle=dashed,linewidth=0.6pt,arrows=->,arrowscale=1.8](-1.176,1.75)(-1.176,0)
         \psline[linecolor=red,linewidth=0.6pt,arrows=->,arrowscale=1.8](-1.176,3.61936)(-0.2022,2.2793)
         \psline[linecolor=red,linewidth=0.6pt,arrows=->,arrowscale=1.8](-1.176,3.61936)(0.447,1.386)
  (-0.4016,1.1043)
    \psline[linestyle=dashed,linewidth=0.3pt](-1.05145,-0.76396)(-1.29969,0)(-1.05145,0.76396)(-0.4016,1.23608)
    (0.4016,1.23608)(1.05145,0.76396)(1.29969,0)(1.05145,-0.76396)
           \rput(1.55,4){\small B$'$}
           \rput(-1.5,4){\small B}
           \rput(0.66,1.25){\scalebox{0.75}{\small C$'$}}
           \rput(-0.6,1.25){\scalebox{0.75}{\small C}}
           \rput(0.9,0.6){\scalebox{0.7}{\small C$'_1$}}
           \rput(-0.9,0.6){\scalebox{0.7}{\small C$_1$}}
          \rput(1.1,0){\scalebox{0.7}{\small C$'_2$}}
           \rput(-1.48,-0.2){\scalebox{0.7}{\small C$_2$}}
           \rput(1.55,-1.7){\small A$'$}
           \rput(-1.5,-1.7){\small A}
     \rput(-3,-1.3){(a)}
}
}
\scalebox{1}{
\rput(12.4,4.3){
\psline(0.4016,-1.23608)(1.29968,-4)
\psline(-0.4016,-1.23608)(-1.29968,-4)
  \psline[linestyle=dashed,linewidth=0.5pt](-1.05145,-0.76396)(-3.40261,-2.47215)
    \psline[linewidth=0.8pt](1.29968,-4)(1.29968,1.5)
       \psline[linewidth=0.8pt](-1.29968,-4)(-1.29968,1.5)
   %
  \psline[linecolor=red,linewidth=0.6pt,arrows=->,arrowscale=1.8]
  (-0.28,-1.075)(0.42,-1.31)(-0.192,-2.154)
  \psline[linecolor=red,linewidth=0.6pt,arrows=->,arrowscale=1.8](0.42,-1.31)(-1.11,-3.42)(-1.11,-4.4)
  \psline[linecolor=red,linestyle=dashed,linewidth=0.6pt,arrows=->,arrowscale=1.8](-1.11,-0.1)(-1.11,-2)
  \psline[linecolor=red,linestyle=dashed,linewidth=0.6pt](-1.11,-2)(-1.11,-3.42)
    \psline[linestyle=dashed,linewidth=0.3pt](-1.05145,0.76396)
    (-1.29969,0)(-1.05145,-0.76396)(-0.4016,-1.23608)(0.4016,-1.23608)(1.05145,-0.76396)(1.29969,0)(1.05145,0.76396)
      \psline[linestyle=dotted,linewidth=0.8pt](-1.29968,-4)(1.05145,-0.76396)
           \rput(1.55,-4){\small A$'$}
           \rput(-1.5,-4){\small A}
           \rput(0.4,-1.05){\scalebox{0.75}{\small D$'$}}
           \rput(-0.6,-1.28){\scalebox{0.75}{\small D}}
           \rput(0.9,-0.6){\scalebox{0.7}{\small D$'_1$}}
           \rput(-0.88,-0.65){\scalebox{0.7}{\small D$_1$}}
          \rput(1.1,0.05){\scalebox{0.7}{\small D$'_2$}}
           \rput(-1.5,-0.05){\scalebox{0.7}{\small D$_2$}}
           \rput(1.55,1.7){\small B$'$}
           \rput(-1.5,1.7){\small B}
     \rput(-3,-3.6){(b)}
}
}
\end{picture}
\label{fig unfolding} \caption{Unfolding of a billiard trajectory.}
\end{figure}
The points ${\mathrm C}$ and ${\mathrm
C}'$ are uniquely determined by this condition. This choice guarantees that the unfolded trajectory of each particle intersects this broken line; hence the original billiard trajectory, after several reflections from ${\mathrm B}{\mathrm C}$ and ${\mathrm B}'{\mathrm C}'$, eventually intersects the segment ${\mathrm C}{\mathrm C}'$.

Denote $\measuredangle{\mathrm A}{\mathrm B}{\mathrm C} =: \al$ (and therefore $\al = \measuredangle{\mathrm B}{\mathrm A}{\mathrm D} = \measuredangle{\mathrm A}'{\mathrm B}'{\mathrm C}' =  \measuredangle{\mathrm B}'{\mathrm A}'{\mathrm D}'$); we assume that $\al < \pi/4$. After the first reflection the particle velocity forms the angle $2\al$ with the vertical direction $(0, -1)$; after the second reflection the angle is $4\al$, and so on. At the point of intersection with ${\mathrm C}{\mathrm C}'$ the angle is $2k\al$, where $k$ is a positive integer such that $2k\al < \pi/2$.

While the particle belongs to the rectangle ${\mathrm C}{\mathrm C}'{\mathrm D}'{\mathrm D}$, the angle remains equal to $2k\al$, and when the particle makes reflections from the sides ${\mathrm A}{\mathrm D}$ and ${\mathrm A}'{\mathrm D}'$, the angle decreases, taking successively the values $2(k - 1)\al$,\, $2(k - 2)\al, \ldots$, and finally, after the last reflection, it becomes $2k'\al$, where $k'$ is a nonnegative integer, $0 \le k' \le k$.

Let us show that $k' = 0$ and therefore, the final velocity is vertical. To that end, let us unfold the final part of the trajectory (below the line ${\mathrm D}{\mathrm D}'$); see Fig.\,{\figunfolding}b. The broken line $\ldots {\mathrm D}_2{\mathrm D}_1{\mathrm D}{\mathrm D}'{\mathrm D}'_1{\mathrm D}'_2 \ldots$ generated by this
unfolding touches the lines ${\mathrm A}{\mathrm B}$ and ${\mathrm A}'{\mathrm B}'$ and intersects the unfolded trajectory. We see that the tangent lines drawn from ${\mathrm A}$ to this broken line (the lines ${\mathrm A}{\mathrm D}_2$ and ${\mathrm A}{\mathrm D}'_1$ on Fig.\,{\figunfolding}b) form the angles 0 and $2\al$ with the vertical; this implies that $2\al > 2k'\al$ and therefore, $k' = 0$.

The less $\al$, the less is the quotient $r := |{\mathrm C}{\mathrm C}'|/|{\mathrm B}{\mathrm B}'|$. Actually, $r = r(\al)$ is a monotone increasing continuous function varying from $r(0) = 0$ to $r(\pi/4) = 1$. The exact formula is: $r(\al) = \sin\al/\sin(2\lfloor \pi/(4\al) \rfloor \al + \al)$.

The body of zero resistance is formed by rotation of the trapezia around the vertical symmetry axis. Its shape is determined by the two parameters $\al$ and $k = |{\mathrm C}{\mathrm D}|/|{\mathrm B}{\mathrm C}|$. As $\al \to 0$ and $k \to \infty$, the maximal number of reflections goes to infinity, the relative volume of the body in the cylinder ${\mathrm A}{\mathrm B}{\mathrm B}'{\mathrm A}'$ goes to 1, and the relative height of the cylinder goes to infinity.

By doubling this body, one obtains the body invisible in the direction $v_0$.
\vspace{2mm}

This result can be summarized as follows.

\begin{theor}\label{theor 2} Let $\Om$ be a ring $r^2 \le x_1^2 + x_2^2 \le 1$. For $h$ sufficiently large, there exists a body inscribed in $\Om \times [0,\, h]$ and invisible in the direction $v_0 = (0, 0, -1)$.
\end{theor}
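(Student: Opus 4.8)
The plan is to obtain the required body as the doubled trapezoidal body of revolution described above, with the opening angle $\al$ tuned so that the induced ratio $r(\al)$ equals the inner radius of the prescribed ring. I would first fix the parameters and describe one copy: rescale Fig.\,{\figtrapezia} so that the distance from the axis to the outer side ${\mathrm A}{\mathrm B}$ is $1$; then the distance from the axis to the inner side ${\mathrm C}{\mathrm D}$ equals $r(\al)=|{\mathrm C}{\mathrm C}'|/|{\mathrm B}{\mathrm B}'|$. Since $r(\cdot)$ is continuous and increases from $r(0)=0$ to $r(\pi/4)=1$, the intermediate value theorem provides, for the given $r\in(0,1)$, an angle $\al\in(0,\pi/4)$ with $r(\al)=r$; I fix such an $\al$ and any admissible value of the second shape parameter $|{\mathrm C}{\mathrm D}|/|{\mathrm B}{\mathrm C}|$. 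Rotating the two trapezia about the vertical axis produces a zero resistance body of revolution $\BBB_\al$ whose solid occupies exactly the radii $\rho\in[r,1]$: it is bounded outside by the cylinder $\rho=1$, inside by the cylinder $\rho=r$, and above and below by the cones swept by ${\mathrm B}{\mathrm C}$ and ${\mathrm A}{\mathrm D}$. Hence $\BBB_\al\subset\Om\times\RRR$, the central cylinder $\rho<r$ is empty, and the mid-height section of $\BBB_\al$ is the full annulus $\Om$.

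Next I would double the body to make it invisible, as in the proof of Theorem~\ref{theor 1}(c). Let $\BBB_\al'$ be the reflection of $\BBB_\al$ in a horizontal plane through its lower rim, and put ${\cal D}=\BBB_\al\cup\BBB_\al'$. Writing $\phi$ for the downward radial scattering map of $\BBB_\al$ — well defined since $v^+=v_0$ forces every exiting particle to travel vertically — the key point is that the downward scattering map of the reflected copy $\BBB_\al'$ is exactly $\phi^{-1}$. This follows from the time reversibility of the billiard together with the horizontal mirror symmetry: a downward trajectory in $\BBB_\al'$ corresponds to an upward trajectory in $\BBB_\al$, and reversing the downward motion in $\BBB_\al$ shows that the upward map of $\BBB_\al$ is $\phi^{-1}$. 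Rotational symmetry keeps each trajectory in its meridian plane, so the angular coordinate is preserved. Consequently a particle entering ${\cal D}$ at transverse position $x$ (radius $\rho$) leaves $\BBB_\al$ at radius $\phi(\rho)$ and then leaves $\BBB_\al'$ at radius $\phi^{-1}(\phi(\rho))=\rho$, on the very line it entered: $\tilde x^+_{\cal D}(x,v_0)=x$ for a.e.\ $x$. Thus ${\cal D}$ satisfies D$_3$, and since the transverse map is the identity it trivially preserves area, so ${\cal D}$ is invisible in the direction $v_0$.

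Finally I would verify the inscription conditions and the role of $h$. Both copies lie in the solid ring-cylinder $\{r\le\rho\le1\}$ and share the outer rim circle in the mirror plane, so ${\cal D}$ is a bounded connected set with piecewise smooth boundary, contained in $\Om\times[0,H]$ with $H=\mathrm{height}({\cal D})<\infty$, and still containing a full section $\Om\times\{c\}$ at the mid-height of either copy. Therefore, placing ${\cal D}$ at the bottom of the cylinder, it is inscribed in $\Om\times[0,h]$ and invisible in the direction $v_0$ for every $h\ge H$; this is precisely where ``$h$ sufficiently large'' enters, and indeed $H$ must be allowed to grow without bound as $r=r(\al)\to0$, since the relative height of the convex hull then tends to infinity.

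I expect the main obstacle to be the rigorous proof that the reflected copy realizes the inverse map $\phi^{-1}$, so that the two radial displacements cancel along almost every trajectory. This forces one to discard the measure-zero set of trajectories that strike edges, vertices, or the exceptional segments ${\mathrm C}{\mathrm C}'$ and ${\mathrm D}{\mathrm D}'$, and to check that a generic particle leaves the upper copy and re-enters the lower copy cleanly in free flight; the remaining ingredients — the intermediate-value choice of $\al$, the identification of the swept solid with the annular region, and the presence of a full section $\Om\times\{c\}$ — are elementary.
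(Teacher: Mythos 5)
Your proposal is correct and follows essentially the same route as the paper: the paper's (implicit) proof of Theorem \ref{theor 2} is exactly the trapezium construction preceding its statement --- choosing $\al$ via the stated continuity and range of $r(\al)$ so that the rotated body fills the annulus $\Om$, then applying the doubling already used for Theorem \ref{theor 1}(c). Your explicit time-reversal/mirror-symmetry argument showing that the reflected copy realizes $\phi^{-1}$ merely spells out what the paper leaves implicit in the word ``doubling''.
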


\hspace*{-6.5mm}{\bf Remark.} {\it This theorem is also true for the case where $\Om$ is a special kind of polygon with mutually orthogonal sides; see, e.g., Fig.\,{\fthreepictures}c.}
 \vspace{2mm}

Denote by $m = m(\BBB, v_0)$ the maximal number of reflections of an
individual particle from the body.

\begin{predl}\label{predl 2}
(a) If the body $\BBB$ has zero resistance or leaves no trace in the direction $v_0$ then $m(\BBB, v_0) \ge 2$. (b) If $\BBB$ is invisible in the direction $v_0$ then $m(\BBB, v_0) \ge 4$. These inequalities are sharp: there exist zero resistance bodies and trackless bodies with exactly 2 reflections, and there exist invisible bodies with exactly 4 reflections.
\end{predl}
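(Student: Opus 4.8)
The plan is to treat the two inequalities separately, reading the lower bounds off the geometry of an individual trajectory and then obtaining sharpness from the bodies already constructed.

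For part (a) I would first observe that a bounded body with nonempty interior casts a shadow, so a set of positive measure of incoming rays actually meet $\BBB$. If one had $m(\BBB,v_0)\le 1$, each such ray would undergo exactly one elastic reflection; since a single reflection reverses the component of the velocity along the outward normal, it sends $v_0$ to a vector $v_\BBB^+\ne v_0$ except on the null set of tangential incidences. Hence $v_\BBB^+(x,v_0)\ne v_0$ on a set of positive measure and $R_{v_0}(\BBB)\ne 0$, contradicting zero resistance. As leaving no trace includes $\mathrm D_1$, the same bound $m\ge 2$ applies to trackless bodies. Sharpness is immediate: the rotational body of Fig.\,{\fthreepictures}a has zero resistance with exactly two reflections, and its translated-and-glued variant (Fig.\,{\fthreepictures}c) is trackless with exactly two.

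For part (b) the bound $m\ge 2$ is already available (invisibility implies $\mathrm D_1$), so the task is to exclude $m=2$ and $m=3$. The main device is unfolding. With $v_0=(0,0,-1)$ and $\ell$ the incoming vertical line, the reflections in the successive tangent planes compose to an isometry $\mathcal T$ carrying the straight unfolded continuation of the incoming ray to the true outgoing ray; $\mathcal T$ is a product of $m$ plane reflections, so the linear part of $\mathcal T$ has determinant $(-1)^m$, and invisibility is precisely the condition that $\mathcal T$ fix the directed line $\ell$. For $m=1$ this forces the tangent plane to contain $v_0$ (grazing), which is not a genuine reflection. For $m=2$, $\mathcal T$ is orientation preserving and fixes $\ell$, hence is either a rotation about $\ell$ (both tangent planes contain $v_0$, again grazing) or a translation along $\ell$ (both tangent planes horizontal); in the latter case the particle runs straight up $\ell$, is sent straight back down by the upper plane, and must strike the lower reflecting surface a second time, producing a third reflection — a contradiction. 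Thus $m\ge 3$.

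The crux is $m=3$. Here $P_1,P_3\in\ell$, the polygon $P_1P_2P_3$ lies in the vertical plane $\Pi$ through $\ell$ and $P_2$, and all three normals lie in $\Pi$. A short computation with the reflection law — using that the incoming and outgoing rays are vertical and that the middle reflection reverses the horizontal velocity — shows that $\BBB$ must lie on the far side of $\ell$ at $P_2$ and on the near side at $P_1$ and $P_3$; equivalently, the body pieces touching the three vertices lie on opposite sides of the simple curve formed inside $\Pi$ by the two vertical rays together with $P_1P_2P_3$. In the plane this already forces $\partial\BBB\cap\Pi$ to cross the trajectory between reflections, so the particle would re-enter $\BBB$, the desired contradiction. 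I expect this passage from the planar picture to the genuine three-dimensional body to be the main obstacle: a connected body in $\RRR^3$ may well have a disconnected section $\BBB\cap\Pi$, so the separation argument cannot be invoked from one trajectory alone. The natural remedy is to use the hypothesis globally: invisibility holds for an open set of parallel rays, the first reflections therefore occur on a graph $x_3=f$ over the shadow, the later reflections occupy at most two further sheets, and the requirement that each ray return to its own vertical line over-determines these three surfaces; making this comparison for nearby rays precise is where the argument must be completed. Granting the bound, sharpness follows from the doubled body of Fig.\,{\figinvisible}, whose generic particle reflects exactly four times.
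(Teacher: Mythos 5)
Your part (a), your exclusion of $m=2$, and your sharpness claims are all fine (indeed your $m=2$ analysis via the composition of two reflections is more explicit than the paper's, which merely points to a figure). The genuine gap is exactly where you flag it: the case $m=3$. Your single-trajectory unfolding isometry $\mathcal T$ cannot close it, because the condition ``$\mathcal T$ fixes the directed line $\ell$'' does \emph{not} force $\mathcal T$ to be orientation-preserving: an orientation-reversing isometry of $\RRR^3$ such as a reflection or glide reflection in a plane \emph{containing} $\ell$ fixes $\ell$ with its direction, and a product of three genuine (non-grazing) reflections can perfectly well be such a glide reflection. So the determinant observation $\det=(-1)^m$ applied to one trajectory yields nothing for odd $m$, and your fallback -- a planar separation argument in the vertical plane $\Pi$ through the three reflection points, upgraded by comparing nearby rays -- is left as an unproved program.

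The paper closes this case by making your determinant idea global rather than pointwise: instead of the isometry of a single unfolded trajectory, it tracks the orientation of the moving frame $\big(\frac{\pl x^{(i)}}{\pl x_1},\frac{\pl x^{(i)}}{\pl x_2},v^{(i)}\big)$ attached to a thin parallel beam, where $x^{(i)},v^{(i)}$ describe the motion between the $i$th and $(i+1)$th reflections. Each reflection flips the orientation of this triple, while invisibility ($v^{(m)}=v_0$ and $\tilde x^+_\BBB(x,v_0)=x$, hence $x^{(m)}-x\perp v_0$ with identity transverse Jacobian) forces the initial and final orientations to coincide; therefore $m$ is even, and all odd cases -- in particular $m=3$ -- die at once. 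Combined with the exclusion of $m=2$ this gives $m\ge 4$. You should replace your $m=3$ analysis with this beam-orientation parity argument (or supply a complete proof of your separation claim, which would be substantially harder); as written, the proposal does not establish part (b).
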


\begin{proof}
(a) If $m = 1$ (that is, under the single impact assumption) then the final velocity of each particle does not coincide with the initial one, $v^+_\BBB(x, v_0) \ne v_0$, therefore $R_{v_0}(\BBB) \ne 0$. That is, a zero resistance body requires at least two reflections.

(b) Note that a thin parallel beam of particles changes the orientation under each reflection. To be more precise, let $x(t) = x + v_0 t$,\, $v(t) = v_0$ be the initial motion of a particle, and let $x(t) = x^{(i)}(x) + v^{(i)}(x) t$,\, $v(t) = v^{(i)}(x)$ be its motion between the $i$th and $(i+1)$th reflections, $i = 0,\, 1, \ldots, m$. Let the body be invisible in the direction $v_0$; then one has $v^{(0)} = v^{(m)} = v_0$,\, $x^{(0)} = x$, and $x^{(m)} - x \perp v_0$. At each reflection and for any fixed $x$, the orientation of the triple $\big( \frac{\pl x^{(i)}}{\pl x_1},\,  \frac{\pl x^{(i)}}{\pl x_2},\, v^{(i)} \big)$ changes. The initial and final orientations, $\big( \frac{\pl x^{(0)}}{\pl x_1},\,  \frac{\pl x^{(0)}}{\pl x_2},\, v^{(0)} \big)$ and $\big( \frac{\pl x^{(m)}}{\pl x_1},\,  \frac{\pl x^{(m)}}{\pl x_2},\, v^{(m)} \big)$, coincide, therefore $m$ is even.

On the other hand, $m$ cannot be equal to 2, as seen from Fig.\,{\figtworeflections}. Therefore, $m \ge 4$.
\begin{figure}[h]
\begin{picture}(0,90)
\scalebox{1}{ \rput(7.7,-1.25){
\psline[linecolor=red,arrows=->,arrowscale=1.8](0,4)(0,3.3)
\psline[linecolor=red,arrows=->,arrowscale=1.8](0,4)(0,2.9)(1,2.4)
\psline[linecolor=red,arrows=->,arrowscale=1.8](0,2.9)(1.6,2.1)(1.6,1)
 \psdots(0,2.9)(1.6,2.1)
 \rput(-0.3,2.9){1}
  \rput(1.9,2.1){2}
  } }
\end{picture}
\label{fig 2 reflections} \caption{Two reflections are not enough for an invisible body.}
\end{figure}
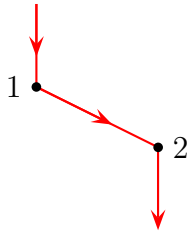

>From the examples of bodies discussed above one concludes that the inequalities in (a) and (b) are sharp.
\end{proof}

Finally, put some open questions.

\begin{enumerate}

\item Do there exist bodies invisible in more than one direction? The same question concerns bodies of zero resistance/leaving no trace.

\item For which domains $\Om$ (others than a ring) is Theorem \ref{theor 2} true?

\item The resistance of any convex body is nonzero. However, by taking a small portion of volume out of a convex body, one can get a body of zero resistance. Namely, there exists a sequence of zero resistance bodies $\BBB_n$ such that their relative volumes $\kappa(\BBB_n)$ go to 1, $\lim_{n\to\infty} \kappa(\BBB_n) = 1$. The maximal number of reflections for these bodies goes to infinity, $\lim_{n\to\infty} m(\BBB_n, v_0) = \infty$.
    The question is: estimate the maximal relative volume of a zero resistance body $\BBB$, given that the maximal number of reflections does not exceed a fixed value $m \ge 2$. In other words, estimate $\kappa_m := \sup \{ \kappa(\BBB) : R_{v_0}(\BBB) = 0, \ m(\BBB, v_0) \le m \}$. It is already known that $\kappa_m \ge 14/27$ and $\lim_{m\to\infty} \kappa_m = 1$.

\end{enumerate}

\section*{Acknowledgements}

This work was supported by {\it Centre for Research on Optimization
and Control} (CEOC) from the ''{\it Funda\c{c}\~{a}o para a
Ci\^{e}ncia e a Tecnologia}'' (FCT), cofinanced by the European
Community Fund FEDER/POCTI, and by FCT: research project
PTDC/MAT/72840/2006.



\begin{thebibliography}{99}

\bibitem{N}
I. Newton. {\it Philosophiae Naturalis Principia Mathematica}.
(1686).

\bibitem{BK} G. Buttazzo and B. Kawohl.
\textit{On Newton's problem of minimal resistance}. Math. Intell.
{\bf 15}, 7-12 (1993).

\bibitem{BFK} G. Buttazzo, V. Ferone, and B. Kawohl.
\textit{Minimum problems over sets of concave functions and related
questions}. Math. Nachr. {\bf 173}, 71-89 (1995).

\bibitem{BrFK} F. Brock, V. Ferone, and B. Kawohl.
\textit{A symmetry problem in the calculus of variations}. Calc.
Var. {\bf 4}, 593-599 (1996).

\bibitem{BG}
G. Buttazzo and P. Guasoni,\, {\it Shape optimization problems over
classes of convex domains}. J. Convex Anal. {\bf 4}, No.2, 343-351
(1997).

\bibitem{LO} T. Lachand-Robert and E. Oudet,
\textit{Minimizing within convex bodies using a convex hull method}.\, SIAM J. Optim. {\bf 16}, 368--379 (2006).


\bibitem{LP1} T. Lachand-Robert and M.~A. Peletier.
\textit{Newton's problem of the body of minimal resistance in the
class of convex developable functions}. Math. Nachr. {\bf 226},
153-176 (2001).

\bibitem{CL1} M. Comte and T. Lachand-Robert.
\textit{Newton's problem of the body of minimal resistance under a
single-impact assumption}. Calc. Var. Partial Differ. Equ. {\bf 12},
173-211 (2001).

\bibitem{CL2} M. Comte and T. Lachand-Robert.
\textit{Existence of minimizers for Newton's problem of the body of
minimal resistance under a single-impact assumption}. J. Anal. Math.
{\bf 83}, 313-335 (2001).



\bibitem{P1}
A.\,Yu. Plakhov. {\it Newton's problem of a body of minimal
aerodynamic resistance}. Dokl. Akad. Nauk {\bf 390}, 314-317 (2003).

\bibitem{P2}
A.\,Yu. Plakhov. {\it Newton's problem of the body of minimal
resistance with a bounded number of collisions}. Russ. Math. Surv.
{\bf 58}, 191-192 (2003).









\end{thebibliography}
\end{document}